\documentclass[10pt, a4paper, oneside, reqno]{amsart}
\usepackage{graphicx}
\usepackage{float}
\usepackage{pstricks}
\usepackage{amscd}
\usepackage{amsmath}
\usepackage{amsxtra}
\usepackage{hyperref}
\usepackage[T1]{fontenc}
\usepackage[utf8]{inputenc}
\usepackage{lipsum}
\usepackage{tikz}
\usepackage{amssymb, amsthm}
\usepackage{url}
\usepackage{enumerate}
\usepackage{mathrsfs}
\usepackage{epsfig}
\usepackage{soul}
\usepackage[active]{srcltx}

\usetikzlibrary{arrows}
\usetikzlibrary{calc}
\theoremstyle{plain}
\newtheorem{theorem}{Theorem}[section]
\newtheorem{c-theorem}{Construction theorem}[section]

\newtheorem{lemma}[theorem]{Lemma}
\newtheorem{proposition}[theorem]{Proposition}
\newtheorem{corollary}[theorem]{Corollary}

\theoremstyle{definition}

\newtheorem{question}[theorem]{Question}

\theoremstyle{remark}
\newtheorem{remark}[theorem]{Remark}

\makeatletter
\@namedef{subjclassname@2020}{%
\textup{2020} Mathematics Subject Classification}
\makeatother

\newcommand{\ncom}{\newcommand}
\ncom{\bq}{\begin{equation}}
\ncom{\eq}{\end{equation}}
\ncom{\beqn}{\begin{eqnarray*}}
\ncom{\eeqn}{\end{eqnarray*}}
\ncom{\beq}{\begin{eqnarray}}
\ncom{\eeq}{\end{eqnarray}}
\ncom{\nno}{\nonumber}
\ncom{\rar}{\rightarrow}
\ncom{\Rar}{\Rightarrow}
\ncom{\noin}{\noindent}
\ncom{\bc}{\begin{centre}}
\ncom{\ec}{\end{centre}}
\ncom{\sz}{\scriptsize}
\ncom{\rf}{\ref}
\ncom{\sgm}{\sigma}
\ncom{\Sgm}{\Sigma}
\ncom{\dt}{\delta}
\ncom{\Dt}{Delta}
\ncom{\lmd}{\lambda}
\ncom{\Lmd}{\Lambda}
\ncom{\eps}{\epsilon}
\ncom{\pcc}{\stackrel{P}{>}}
\ncom{\dist}{{\rm\,dist}}
\ncom{\im}{{\rm Im\,}}
\ncom{\sgn}{{\rm sgn\,}}
\ncom{\ba}{\begin{array}}
\ncom{\ea}{\end{array}}
\ncom{\eop}{\hfill{{\rule{2.5mm}{2.5mm}}}}
\ncom{\eof}{\hfill{{\rule{1.5mm}{1.5mm}}}}
\ncom{\hone}{\mbox{\hspace{1em}}}
\ncom{\htwo}{\mbox{\hspace{2em}}}
\ncom{\hthree}{\mbox{\hspace{3em}}}
\ncom{\hfour}{\mbox{\hspace{4em}}}
\ncom{\hsev}{\mbox{\hspace{7em}}}
\ncom{\vone}{\vskip 2ex}
\ncom{\vtwo}{\vskip 4ex}
\ncom{\vonee}{\vskip 1.5ex}
\ncom{\vthree}{\vskip 6ex}
\ncom{\vfour}{\vspace*{8ex}}
\ncom{\norm}{\|\;\;\|}
\ncom{\integ}[4]{\int_{#1}^{#2}\,{#3}\,d{#4}}
\ncom{\inp}[2]{\langle{#1},\,{#2} \rangle}
\ncom{\Inp}[2]{\Langle{#1},\,{#2} \Langle}
\ncom{\vspan}[1]{{{\rm\,span}\#1 \}}}
\ncom{\dm}[1]{\displaystyle {#1}}

\begin{document}
\title[L\'evy measures for Dirichlet-type spaces]{L\'evy measures for Dirichlet-type spaces on the unit bidisc}
\author[Santu Bera and Shanola S. Sequeira]{Santu Bera and Shanola S. Sequeira}

\address{Department of Mathematics \\
Indian Institute of Technology  Kharagpur, India}
\email{santu@maths.iitkgp.ac.in}

\address{School of Mathematics and Computer Science \\
Indian Institute of Technology Goa, India}
\email{shanolasequeira@gmail.com, shanola.crd208@iitgoa.ac.in}

	
\keywords{L\'evy-Khinchin representation, completely hyperexpansive operators, Dirichlet-type spaces, completely alternating functions, Hausdorff moment.}
	
	\subjclass[2020]{Primary 47A13, 47B20, 47B37; Secondary 47B39}

\begin{abstract}
        For the Dirichlet-type space $\mathcal D(\rho^{(1)},\rho^{(2)})$ on the unit bidisc $\mathbb D^2,$ where $\rho^{(1)}$ and $\rho^{(2)}$ are finite positive Borel measures on the closed unit disc $\overline{\mathbb D},$ we show that the L\'evy measure $\nu_{(\mathscr M_z,1)}$ associated with the completely alternating multisequence $\left\{\|z^\alpha\|_{\mathcal D(\rho^{(1)},\rho^{(2)})}^2
\right\}_{\alpha\in\mathbb Z_+^2}$ admits the explicit representation
        \begin{equation*}
		d\nu_{(\mathscr M_z,1)}(x)=\frac{1}{1-x_1}d((S_{*}\rho^{(1)})\times\delta_1)(x)+\frac{1}{1-x_2}d(\delta_1\times (S_{*}\rho^{(2)}))(x) 
		\end{equation*}	
		on $[0,1]^2\backslash\{(1,1)\},$ where $S_{*}\rho^{(i)},$ $i=1,2$, denotes the pushforward measure  of $\rho^{(i)}$ by the map  $S:\overline{\mathbb D}\rightarrow [0,1]$ defined by $S(z)=|z|^2,$ $z\in \overline{\mathbb D},$ and $\delta_1$ denotes the Dirac measure at 1.
	\end{abstract}
	\maketitle
	\section{Introduction and Preliminaries}
 
    The theory of Dirichlet-type spaces has been a subject of extensive research over the past several decades, owing to its rich function-theoretic structure and its deep connections with operator theory. A fundamental result of Richter \cite{R1991} shows that harmonically weighted Dirichlet-type spaces on the unit disc serve as model spaces for cyclic analytic $2$-isometries. Aleman \cite[Chapter~IV]{A1993} subsequently extended this result to the broader class of completely hyperexpansive operators by considering superharmonically weighted Dirichlet-type spaces on the unit disc. Recently, several-variable counterparts of these spaces and their associated operator models have been developed on the unit ball \cite{CGR2020} and on the unit bidisc \cite{BCG2025,B2025}. The present paper addresses the measure-theoretic structure underlying the bidisc model for completely hyperexpansive operator pairs. More precisely, we relate the L\'evy measure arising from the L\'evy--Khinchin representation of the multiplication pair to the measures used to define the corresponding Dirichlet-type space. 

Throughout the paper, $\mathbb D:= \{z \in \mathbb C : |z| < 1\} $ denotes the open unit disc  in the complex plane $\mathbb C$ and $\mathbb Z_+$ denotes the set of nonnegative integers. For a positive integer $d,$ let $\mathbb Z_+^d$ denote the $d$-fold Cartesian product of $\mathbb Z_+,$ which forms a semigroup under coordinatewise addition. Given $\alpha=(\alpha_1,\ldots,\alpha_d)$ and $\beta=(\beta_1,\ldots,\beta_d)$ in $\mathbb Z^d_+$, we write $|\beta|:=\sum_{i=1}^d\beta_i,$ $\binom{\alpha}{\beta}:=\prod_{i=1}^d\binom{\alpha_i}{\beta_i}$ and write $\beta\leq \alpha$ when $\beta_i\leq \alpha_i$ for every $i\in\{1,\ldots,d\}.$ For a map $\phi:\mathbb Z^d_+\rightarrow \mathbb R$, the $i$-th ($1\leq i\leq d$) backward difference operator (respectively, the forward difference operator) is defined by $\nabla_i\phi(\alpha)=\phi(\alpha)-\phi(\alpha+\varepsilon(i))$ (respectively, $\Delta_i\phi(\alpha)=\phi(\alpha+\varepsilon(i))-\phi(\alpha)$), where $\alpha\in \mathbb Z_+^d$ and $\varepsilon(i)$ is the $d$-tuple whose $i$-th coordinate is $1$ and remaining coordinates are $0$. Note that the operators $\nabla_1,\ldots,\nabla_d$ commute and hence for $\alpha\in\mathbb Z_+^d$, we set
$\nabla^\alpha:=\nabla_1^{\alpha_1}\cdots\nabla_d^{\alpha_d}.$ Following \cite{BCR1984}, a map $\phi: \mathbb{Z}_+^d \to \mathbb{R}$ is called  \textit{completely monotone} if $(\nabla^{\alpha}\phi)(\beta) \geq 0$ for each $\alpha,\beta \in \mathbb{Z}_+^d$ and $\phi$ is called \textit{completely alternating} if $(\nabla^{\alpha}\phi)(\beta) \leq 0$ for each $\beta \in \mathbb{Z}_+^d$ and $\alpha \in \mathbb{Z}_+^d\backslash \{0\}$.
	
	We recall the well-known measure-theoretic characterizations of these two classes (see \cite[Ch.~4, Propositions~6.11~\&~6.12]{BCR1984}):
	\begin{proposition}\label{comp-mono-alter}
		Let $\phi: \mathbb{Z}_+^d \to \mathbb{R}$. Then the following holds:
		\begin{itemize}
			\item[$\mathrm{(i)}$]  $\phi$ is completely monotone if and only if  there exists a finite positive Radon measure $\mu$ on $[0,1]^d$ such that 
			\begin{equation}\label{hausdorff-moment}
			\phi(\alpha) =  \int_{[0,1]^d} x^\alpha \, d\mu(x), \quad  \alpha \in \mathbb{Z}^d_+.
			\end{equation}
			\item[$\mathrm{(ii)}$] $\phi$ is completely alternating if and only if there exist $a \in \mathbb{R}$, an additive map $b: \mathbb{Z}_+^d \to \mathbb{R}_+$, and a positive Radon measure $\nu$ on $[0,1]^d \backslash \{\mathbf{1}\}$ $($where $\mathbf{1} = (1,\ldots,1))$ such that 
			\begin{equation}\label{LK-repn}
			\phi(\alpha) = a + b(\alpha) + \int_{[0,1]^d \backslash \{\mathbf{1}\}} (1-x^\alpha) \, d\nu(x), \quad \alpha \in \mathbb{Z}^d_+ \backslash \{0\},
			\end{equation}
			where 
			\beq\label{b-additive}
	 a = \phi(0) \text{ and } \quad b(\alpha) = \lim_{n \to \infty} \frac{\phi(n\alpha)}{n}, \quad \alpha \in \mathbb{Z}_+^d.
	 \eeq
		\end{itemize}
	\end{proposition}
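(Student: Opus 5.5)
The proposition has two parts. For (i) I would prove the easy direction by direct computation and the converse through a Bernstein-type approximation. For (ii) I would reduce to (i) by applying it to the first differences of $\phi$, and then glue the resulting measures into a single L\'evy measure $\nu$.

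\emph{Part (i).} If $\phi(\alpha)=\int x^\alpha\,d\mu$, then $(\nabla^\alpha\phi)(\beta)=\int x^\beta(1-x)^\alpha\,d\mu(x)\ge 0$, where $(1-x)^\alpha=\prod_i(1-x_i)^{\alpha_i}$.

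Conversely, assume $\phi$ is completely monotone. For $n\in\mathbb Z_+$ define the discrete measures
\[
\mu_n=\sum_{0\le\beta\le n\mathbf 1}\binom{n\mathbf 1}{\beta}(\nabla^{n\mathbf 1-\beta}\phi)(\beta)\,\delta_{\beta/n}.
\]
These are positive by hypothesis. By the multivariate binomial (Bernstein) identity their total mass is $\phi(0)$. By Banach--Alaoglu a subsequence converges weak$^*$ to a measure $\mu$ on $[0,1]^d$. The same binomial bookkeeping as in the one-variable Hausdorff theorem shows $\int x^\alpha d\mu_n\to\phi(\alpha)$ for each fixed $\alpha$: the $\alpha$-th moment of $\mu_n$ equals $\phi(\alpha)$ up to an $O(1/n)$ error, coordinatewise. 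Hence $\mu$ represents $\phi$.

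\emph{Part (ii), sufficiency.} The function $\alpha\mapsto 1-x^\alpha$ satisfies $\nabla^\gamma(1-x^{\cdot})(\beta)=-x^\beta(1-x)^\gamma\le0$ for $\gamma\neq0$. For additive $b(\alpha)=\sum_i b_i\alpha_i$ with $b_i\ge0$, the first differences are $\nabla_i b=-b_i\le 0$ and all differences of order at least $2$ vanish. Constants are killed by every $\nabla^\gamma$ with $\gamma\ne0$. Integrating against $\nu$ is legitimate, with integrability addressed below, so the representation yields a completely alternating $\phi$.

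\emph{Part (ii), necessity.} Set $\psi_i:=-\nabla_i\phi=\Delta_i\phi$. Then $\nabla^\alpha\psi_i=-\nabla^{\alpha+\varepsilon(i)}\phi\ge0$, so by (i) there are finite measures $\mu_i$ with $\Delta_i\phi(\alpha)=\int x^\alpha d\mu_i$. Commutativity $\Delta_j\Delta_i\phi=\Delta_i\Delta_j\phi$ gives
\[
\int x^\alpha(x_j-1)\,d\mu_i=\int x^\alpha(x_i-1)\,d\mu_j \quad \text{for all } \alpha.
\]
By uniqueness in the Hausdorff moment problem (polynomials are dense in $C([0,1]^d)$), this yields the compatibility relation $(1-x_j)\,d\mu_i=(1-x_i)\,d\mu_j$.

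\emph{Gluing.} Define $\nu$ on $U_i=\{x_i<1\}$ by $d\nu=(1-x_i)^{-1}d\mu_i$. Compatibility makes these definitions agree on $U_i\cap U_j$, and $\bigcup_iU_i=[0,1]^d\setminus\{\mathbf 1\}$, so $\nu$ is a well-defined positive Radon measure there. On $\{x_i=1\}$ the compatibility relation gives $(1-x_j)\mu_i=0$ for every $j$, so $\mu_i|_{\{x_i=1\}}$ is a point mass $b_i\delta_{\mathbf 1}$. Put $b(\alpha)=\sum b_i\alpha_i$.

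\emph{Verification.} Connect $0$ to $\alpha$ by a lattice path $\gamma^{(0)},\dots,\gamma^{(|\alpha|)}$, and use the telescoping identity $1-x^\alpha=\sum_k x^{\gamma^{(k)}}(1-x_{i_k})$. Writing $\phi(\alpha)-\phi(0)=\sum_k\Delta_{i_k}\phi(\gamma^{(k)})$ and splitting each $\mu_{i_k}$ into its part off $\mathbf 1$ and its atom at $\mathbf 1$ gives $\phi(\alpha)=\phi(0)+b(\alpha)+\int(1-x^\alpha)d\nu$. This also shows $1-x^\alpha\in L^1(\nu)$.

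\emph{Formula for $b$.} Using $1-t^n\le n(1-t)$ for $t\in[0,1]$, we get $(1-x^{n\alpha})/n\le 1-x^\alpha$, and the left side tends to $0$ pointwise on $[0,1]^d\setminus\{\mathbf 1\}$. Dominated convergence then gives $\phi(n\alpha)/n\to b(\alpha)$. That $a=\phi(0)$ is immediate.

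The main obstacle is the gluing step: turning the $d$ measures $\mu_i$ into one $\nu$. This requires the compatibility relation via moment uniqueness and a careful account of where the mass on $\{x_i=1\}$ goes, which is exactly what produces the additive part $b$.
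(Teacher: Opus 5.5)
Your proof is correct, and it takes a genuinely different route from the one the paper relies on. The paper does not prove this proposition. It quotes it from Berg--Christensen--Ressel, where both parts are the special case $S=\mathbb Z_+^d$ of general representation theorems for completely monotone and completely alternating functions on abelian semigroups; the relevant semicharacters of $\mathbb Z_+^d$ are $\alpha\mapsto x^\alpha$, $x\in[0,1]^d$. The paper takes the uniqueness of the L\'evy triple from a separate source.

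You instead argue directly on $\mathbb Z_+^d$. For (i) you use Hausdorff's Bernstein-measure argument. For (ii) you reduce to (i) through the differences $\Delta_i\phi$: the commutation $\Delta_i\Delta_j=\Delta_j\Delta_i$ plus moment uniqueness gives $(1-x_j)\,d\mu_i=(1-x_i)\,d\mu_j$. This relation is exactly what makes the gluing consistent, and it forces the mass of $\mu_i$ on $\{x_i=1\}$ to sit at $\mathbf 1$.

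What each route buys:
\begin{itemize}
\item The abstract route gives generality: any abelian semigroup, e.g.\ $\mathbb R_+^d$.
\item Your route gives transparency. As a by-product it proves the relations $d\mu_i=(1-x_i)\,d\nu$ on $[0,1]^d\setminus\{\mathbf 1\}$ and $b(\alpha)=\sum_i\alpha_i\mu_i(\{\mathbf 1\})$, which the paper only quotes in \eqref{mu-i-nu}--\eqref{b-alpha}.
\item Your route also gives uniqueness of $(a,b,\nu)$ almost for free, which is the fact the paper needs in Theorem~\ref{main-theorem}. Any representation with $a=\phi(0)$ yields $\Delta_i\phi(\alpha)=b_i+\int x^\alpha(1-x_i)\,d\nu$. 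By uniqueness in (i), $\mu_i=(1-x_i)\nu+b_i\delta_{\mathbf 1}$, so $b_i$ and the restriction of $\nu$ to each $U_i$ are determined.
\end{itemize}

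Two small points to tighten:
\begin{itemize}
\item In the sufficiency direction you use the representation at $\alpha=0$ (``constants are killed''). So the clause $a=\phi(0)$ must be read as a hypothesis. Without it the ``if'' direction fails: take $\phi(0)=1$ and $\phi=0$ elsewhere, which satisfies the formula for $\alpha\neq 0$ with $a=0$, $b=0$, $\nu=0$, yet $\nabla_1\phi(0)=1>0$.
\item To call the glued $\nu$ a Radon measure, note that it is finite on compact subsets of $[0,1]^d\setminus\{\mathbf 1\}$. Such a set lies in $\bigcup_i\{x_i\le 1-\epsilon\}$ for some $\epsilon>0$, and there $\nu\le\epsilon^{-1}\sum_i\mu_i$.
\end{itemize}
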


	\begin{remark}
   Hausdorff \cite{H1923} obtained the one-variable characterization in Proposition~\ref{comp-mono-alter}(i) in terms of Hausdorff moment sequences, and its multivariable counterpart appeared in \cite{HS1933}.  The unique measure $\mu$ in \eqref{hausdorff-moment} is called the \emph{representing measure} for the completely monotone function $\phi$. The representation \eqref{LK-repn} is a special case of the L\'evy--Khinchin representation on abelian semigroups. 
        The triple $(a,b,\nu),$ appearing in \eqref{LK-repn}, referred to as the \textit{L\'evy triple} for the completely alternating function $\phi,$ is uniquely determined by $\phi$ (see \cite[Theorems~3.7~\&~4.4]{BCR1976}). The measure $\nu$ in \eqref{LK-repn} is called the {\it L\'evy measure} for $\phi.$ 
        For further background on the Hausdorff moment problem and completely alternating functions, see \cite{BCR1976,BCR1984,BD2005,RZ2019}.\end{remark}
		
	These representation theorems have natural operator-theoretic interpretations.
	Proposition~\ref{comp-mono-alter}(i) underlies Athavale's multivariable extension \cite{At1987} of Agler's criterion \cite{A1985} for contractive subnormal operators. The importance of Proposition~\ref{comp-mono-alter}(ii) is discussed in \cite{At1996,AS1999}, where it is linked to completely hyperexpansive operators. 
	To make this connection precise, consider a commuting $d$-tuple $\mathbf T=(T_1,\ldots,T_d)$ on a complex Hilbert space $\mathcal H,$ that is, $T_1,\ldots,T_d$ are bounded linear operators on $\mathcal H$  such that $T_iT_j=T_jT_i$ for all $1\leq i,j\leq d.$ Following \cite{A1990}, for each $\alpha\in \mathbb Z_+^d,$ we define
	\begin{equation}\label{CH}
	B_\alpha({\bf T}):=\sum_{\substack{\beta\in \mathbb Z_+^d \\ 0\leq \beta \leq \alpha}}(-1)^{|\beta|}\binom{\alpha}{\beta}{\bf T}^{*\beta}{\bf T}^{\beta},
	\end{equation}
	where  ${\bf T}^{\beta}=T^{\beta_1}_1\cdots T^{\beta_d}_d.$ Recall that a commuting $d$-tuple ${\bf T}$ is called {\it completely hypercontractive} if $B_\alpha({\bf T})\geq 0$ for all $\alpha\in \mathbb Z_+^d$ (see \cite{A1985, At1987}).  On the other hand, a commuting $d$-tuple ${\bf T}$ is called {\it completely hyperexpansive} if $B_\alpha({\bf T})\leq 0$ for all $\alpha\in \mathbb Z_+^d \backslash \{0\}$ (see \cite{At1996, AS1999}). By \cite[Remark~2]{AS1999}, ${\bf T}$ is completely hyperexpansive if and only if for each $h\in \mathcal H$, the function $\phi_{({\bf T},h)}: \mathbb{Z}_+^d \to \mathbb{R}$ defined by
	$$ \phi_{({\bf T},h)}(\alpha) := \|\mathbf{T}^\alpha h\|^2, \quad \alpha \in \mathbb{Z}_+^d $$ 
	is completely alternating.
	Consequently, by Proposition \ref{comp-mono-alter}(ii), there exist an additive map $b_{({\bf T},h)}: \mathbb{Z}_+^d \to \mathbb{R}_+$ and a positive Radon measure $\nu_{({\bf T},h)}$ on $[0,1]^d \backslash \{\mathbf{1}\}$ such that for all $\alpha \in \mathbb{Z}^d_+ \backslash \{0\}$,
	\begin{equation}\label{eq_multivarible}
	\phi_{({\bf T},h)}(\alpha)=\|{\bf T}^\alpha h\|^2=\|h\|^2+b_{({\bf T},h)}(\alpha) +\int_{[0,1]^d\backslash \{\mathbf{1}\}}(1-x^\alpha)d\nu_{({\bf T},h)}(x).
	\end{equation}
	We refer to \eqref{eq_multivarible} as the \textit{L\'evy-Khinchin representation} for the tuple $(\mathbf{T}, h)$ and to $\nu_{({\bf T},h)}$ as the \textit{L\'evy measure} for $(\mathbf{T}, h)$. 
	By definition, each $T_i$ is completely hyperexpansive for $i\in \{1,\ldots,d\},$ and hence for each $h\in \mathcal H,$ there exist an additive map $b_{(T_i,h)}: \mathbb{Z}_+ \to \mathbb{R}_+$ and a positive Radon measure $\nu_{(T_i,h)}$ on $[0,1)$ such that 
	\begin{equation}\label{eq_onevarible}
	\|T_i^n h\|^2=\|h\|^2+b_{(T_i,h)}(n) +\int_{[0,1)}(1-x^n)d\nu_{(T_i,h)}(x),\quad n\geq 0.
	\end{equation}
	
	In \cite{B2025}, an analytic model for completely hyperexpansive operator pair is developed in terms of Dirichlet-type spaces on the unit bidisc as follows: let $H^2(\mathbb D^2)$ denote Hardy space on the unit bidisc $\mathbb D^2$. Given finite positive Borel measures $\rho^{(1)}$ and $\rho^{(2)}$ on the closed unit disc $\overline{\mathbb D}$, the Dirichlet-type space $\mathcal D(\rho^{(1)},\rho^{(2)})$ (see \cite[Definition~1.1]{B2025}) is defined by
	\[{\mathcal D}(\rho^{(1)},\rho^{(2)}) = \{f \in H^2(\mathbb D^2): D_{\rho^{(1)},\rho^{(2)}}(f) <\infty\},\]
	where 
	\begin{align*} 
	D_{\rho^{(1)},\rho^{(2)}}(f) &:=\sup_{0 < r < 1}\int_{0}^{2\pi} \int_{\mathbb D}|\partial_1 f(z_1, re^{i \theta})|^2 U_{\rho^{(1)}}(z_1) \,dA(z_1)\frac{d\theta}{2\pi} \\
	&+ \sup_{0 < r < 1} \int_{0}^{2\pi} \int_{\mathbb D}|\partial_2 f(re^{i \theta}, z_2)|^2 U_{\rho^{(2)}}(z_2) \,dA(z_2)\frac{d\theta}{2\pi},
	\end{align*}
	and the norm  is \begin{equation*}
	\|f\|^2_{\mathcal D({\rho^{(1)},\rho^{(2)}})}:=\|f\|^2_{H^2(\mathbb D^2)}+D_{\rho^{(1)},\rho^{(2)}}(f), \ \ f \in \mathcal D({\rho^{(1)},\rho^{(2)}}).
	\end{equation*}
	According to \cite[Theorem~1.2]{B2025}, a cyclic analytic completely hyperexpansive pair ${\bf T}=(T_1,T_2)$ with vanishing defect, where the defect operator is given by
	\begin{equation*}
	D(T_1,T_2) := I-T_1^*T_1-T_2^*T_2+T_1^*T_2^*T_1T_2,
	\end{equation*}
	is unitarily equivalent to  $\mathscr M_z=(\mathscr M_{z_1},\mathscr M_{z_2})$ on $\mathcal D(\rho^{(1)},\rho^{(2)})$ for some $\rho^{(1)}$ and $\rho^{(2)}$ on $\overline{\mathbb D}$ if and only if the joint kernel $\ker {\bf T}^*=\ker T_1^*\cap \ker T_2^*$ is a one-dimensional generating wandering subspace. 
    
    Since $\mathscr M_z$ is completely hyperexpansive, every $h\in\mathcal D(\rho^{(1)},\rho^{(2)})$ gives rise to a L\'evy measure $\nu_{(\mathscr M_z,h)}$. This leads to the following natural question.
    \begin{question} \label{main-ques}
	For the Dirichlet-type space $\mathcal D(\rho^{(1)},\rho^{(2)})$, how is the L\'evy measure $\nu_{(\mathscr M_z,h)}$ for $(\mathscr M_z,h)$ related to the defining measures $\rho^{(1)}$ and $\rho^{(2)}$?\end{question}
Interestingly, for $h=1,$ the L\'evy measure for $(\mathscr M_{z},1)$ on $\mathcal D(\rho^{(1)},\rho^{(2)})$ decomposes in terms of $\rho^{(1)}$ and $\rho^{(2)}.$ To state the main result, we first recall the notion of {\it pushforward measure} (cf. \cite[Ch. 2, 1.14]{BCR1984}). Let $(X_1,\Sigma_1)$ and $(X_2,\Sigma_2)$ be two measure spaces and  $S:X_1\rightarrow X_2$ be a measurable map. Let $\nu$ be a measure on  $(X_1,\Sigma_1)$. Then the pushforward of $\nu$ by $S$ is defined as 
	\begin{equation*}
	S_{*}\nu(E):=\nu(S^{-1}(E)), \quad E\in \Sigma_2.
	\end{equation*}
	In this case, for any measurable map $g$ on $X_2$, we have
	\begin{equation}\label{push-forward-int}
	\int_{X_2}g(y)\, d(S_{*}\nu)(y)=\int_{X_1}(g\circ S)(x) d\nu(x).
	\end{equation}
	\begin{theorem}\label{d-mu-1-2-decomp}
		Let $\rho^{(1)},\rho^{(2)}$ be finite positive Borel measures on the closed unit disc $\overline{\mathbb D}.$ Then the L\'evy measure $\nu_{(\mathscr M_z,1)}$ for the tuple $(\mathscr M_{z},1)$  on $\mathcal D(\rho^{(1)},\rho^{(2)})$ is of the form
		\begin{equation}\label{levy-k-diri}
		d\nu_{(\mathscr M_z,1)}(x)=\frac{1}{1-x_1}d((S_{*}\rho^{(1)})\times\delta_1)(x)+\frac{1}{1-x_2}d(\delta_1\times (S_{*}\rho^{(2)}))(x) 
		\end{equation}	
		on $[0,1]^2\backslash\{(1,1)\},$ where $S:\overline{\mathbb D}\rightarrow [0,1]$ defined by $S(z)=|z|^2,$ $z\in \overline{\mathbb D}.$
	\end{theorem}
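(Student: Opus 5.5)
Our plan is to compute $\phi(\alpha)=\|z^\alpha\|^2_{\mathcal D(\rho^{(1)},\rho^{(2)})}$ explicitly and then read off the L\'evy triple, using the uniqueness of $(a,b,\nu)$ noted after Proposition~\ref{comp-mono-alter}.

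\textbf{Step 1: compute the norm of a monomial.} For $f=z_1^{\alpha_1}z_2^{\alpha_2}$ we have $\|f\|^2_{H^2(\mathbb D^2)}=1$. For the first Dirichlet term, $|\partial_1 f(z_1,re^{i\theta})|^2=\alpha_1^2|z_1|^{2(\alpha_1-1)}r^{2\alpha_2}$. Integrating in $\theta$ and taking the supremum over $r$ gives
\[
\alpha_1^2\int_{\mathbb D}|z_1|^{2(\alpha_1-1)}U_{\rho^{(1)}}\,dA .
\]
This is the one-variable local Dirichlet integral $D_{\rho^{(1)}}(z^{\alpha_1})$. By Richter--Aleman, $D_{\zeta}(z^n)=\sum_{k=0}^{n-1}|\zeta|^{2k}$ for $\zeta\in\overline{\mathbb D}$, with the standard interpretation: the value is $n$ when $|\zeta|=1$, and for interior $\zeta$ one uses the superharmonic weight formula. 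Fubini then gives
\[
\int_{\overline{\mathbb D}}\sum_{k=0}^{\alpha_1-1}|\zeta|^{2k}\,d\rho^{(1)}(\zeta)=\int_{[0,1]}\sum_{k=0}^{\alpha_1-1}t^k\,d(S_*\rho^{(1)})(t),
\]
using \eqref{push-forward-int}. The second term is symmetric. Hence
\[
\phi(\alpha)=1+\sum_{i=1}^2\int_{[0,1]}\frac{1-t^{\alpha_i}}{1-t}\,d(S_*\rho^{(i)})(t),
\]
where the integrand is read as $\alpha_i$ at $t=1$.

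\textbf{Step 2: split off the linear part.} Write $S_*\rho^{(i)}=\rho^{(i)}(\mathbb T)\delta_1+\mu_i$ with $\mu_i$ supported on $[0,1)$. The atom at $1$ contributes $\rho^{(i)}(\mathbb T)\alpha_i$, an additive map $b$. By \eqref{b-additive}, $b(\alpha)=\lim_n \phi(n\alpha)/n$, and dominated convergence shows that the $[0,1)$ parts contribute nothing to this limit.

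\textbf{Step 3: rewrite the remainder as a L\'evy integral.} We have $1-t^{\alpha_1}=1-x^\alpha$ evaluated at $x=(t,1)$. So
\[
\int_{[0,1)}\frac{1-t^{\alpha_1}}{1-t}\,d\mu_1(t)=\int_{[0,1]^2\setminus\{(1,1)\}}(1-x^\alpha)\,\frac{1}{1-x_1}\,d(\mu_1\times\delta_1)(x).
\]
The analogous identity holds for the second coordinate. Since $(S_*\rho^{(1)}\times\delta_1)$ restricted to $[0,1]^2\setminus\{(1,1)\}$ equals $\mu_1\times\delta_1$, this matches \eqref{levy-k-diri}. The resulting measure is positive and Radon on $[0,1]^2\setminus\{(1,1)\}$, since $1/(1-x_1)$ is locally bounded there. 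Uniqueness of the L\'evy triple then finishes the proof.

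\textbf{Main obstacle.} The delicate step is Step 1: justifying $\alpha_1^2\int|z|^{2(\alpha_1-1)}U_{\rho}\,dA=\int\sum_{k<\alpha_1}|\zeta|^{2k}\,d\rho$. This requires the explicit form of $U_\rho$ (Aleman's superharmonic weight, including the boundary Poisson part and the interior Green part). The boundary part is the classical computation $D_\zeta(z^n)=n$. For interior points I would compute $n^2\int|z|^{2n-2}\log\bigl|\frac{1-\bar\zeta z}{z-\zeta}\bigr|^2\frac{dA}{1-|\zeta|^2}$-type integrals directly via radial integration and the mean value of $\log$. Alternatively, if \cite{B2025} records a formula for $\|z^\alpha\|^2$, I would simply cite it.
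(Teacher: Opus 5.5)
Your proof is correct, but it takes a more direct route than the paper. The paper never computes $\|z^\alpha\|^2$ for mixed $\alpha$. It quotes from its reference that $\mathscr M_z$ on $\mathcal D(\rho^{(1)},\rho^{(2)})$ has zero defect. It then uses Lemma~\ref{induction-lemma} and Theorems~\ref{main-theorem} and~\ref{main-thm-new} to split $\nu_{(\mathscr M_z,1)}=\nu_{(\mathscr M_{z_1},1)}\times\delta_1+\delta_1\times\nu_{(\mathscr M_{z_2},1)}$. Finally it identifies each coordinate measure through Lemma~\ref{inner-prod} (the one-variable formula \eqref{norm-D-sigma} plus Hausdorff uniqueness) and $d\nu=(1-t)^{-1}d\mu$ on $[0,1)$.

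Your Step~1 instead checks the additive splitting $\phi(\alpha)=\phi^{(1)}(\alpha_1)+\phi^{(2)}(\alpha_2)-1$ by hand, since the $\sup_r$ just removes the factor $r^{2\alpha_j}$. So you need neither the defect-zero input nor the operator lemmas. Your Steps~2--3 are then the (i)$\Rightarrow$(ii) argument of Theorem~\ref{main-theorem} and Lemma~\ref{inner-prod} merged into one computation, and you get $b(\alpha)=\rho^{(1)}(\mathbb T)\alpha_1+\rho^{(2)}(\mathbb T)\alpha_2$ explicitly as a bonus.

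The step you flag as the obstacle does go through. The mean of $\log\bigl|\frac{1-\bar\zeta z}{z-\zeta}\bigr|^2$ over $|z|=r$ is $-2\log\max(r,|\zeta|)$. This gives $n^2\int_{\mathbb D}|z|^{2n-2}\log\bigl|\frac{1-\bar\zeta z}{z-\zeta}\bigr|^2\,dA(z)=1-|\zeta|^{2n}$, and the Poisson term gives $n$. Tonelli is legitimate because both kernels are nonnegative. The result is exactly \eqref{norm-D-sigma}, which the paper imports rather than proves.

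What the paper's route buys in exchange is scope: zero defect forces the product-type decomposition of $\nu_{(\mathscr M_z,h)}$ for every $h$, whereas your computation is specific to $h=1$.

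One small imprecision: $1/(1-x_1)$ is not locally bounded on $[0,1]^2\setminus\{(1,1)\}$, since it blows up on $\{x_1=1\}$. Radon-ness still holds, because $\mu_1\times\delta_1$ lives on $[0,1)\times\{1\}$, where the density is bounded on compacts avoiding $(1,1)$.
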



The article is organized as follows: In Section~\ref{section-2}, we use the uniqueness of L\'evy triples to characterize completely alternating functions $\phi:\mathbb Z_+^d\to\mathbb R$ satisfying
$\phi(\alpha)=\sum_{i=1}^d \phi^{(i)}(\alpha_i)-(d-1)\phi(0), \; \alpha=(\alpha_1,\ldots,\alpha_d)\in \mathbb Z^d_+$
	  (see Theorem~\ref{main-theorem}). As an application, we derive an operator-theoretic analog of Theorem~\ref{main-theorem} for completely hyperexpansive $d$-tuples (see Theorem~\ref{main-thm-new}). In Section~\ref{dirichlet-space}, we present a proof of  Theorem~\ref{d-mu-1-2-decomp}. It is worth noting that this proof relies on a one-variable counterpart of Theorem~\ref{d-mu-1-2-decomp}, which appears to be unnoticed in the literature.
       

	\section{Completely hyperexpansive tuples with zero defect}\label{section-2} 
	Let $\phi: \mathbb Z^d_+\to \mathbb R.$ It follows from \cite[Ch.~4, Lemma~6.3]{BCR1984} that $\phi$ is completely alternating if and only if each forward difference $\Delta_i \phi$ is completely monotone for $i=1,2,\ldots,d.$ For a completely alternating function $\phi$ on $\mathbb Z^d_+,$ let $\mu_i$ be the representing measures for $\Delta_i \phi, \; i=1,2\ldots, d,$  as given in \eqref{hausdorff-moment}. Then by \cite[Theorem~1]{A2003} (cf. \cite[Remark~1]{At1996}), we have
	\beq
	\label{mu-i-nu} d\mu_i(x)&=&(1-x_i)d\nu(x) \text{ on } [0,1]^d\backslash \{\boldsymbol{1}\} \text{ for all } i=1,2,\ldots,d,\\	 
	 \label{b-alpha} b(\alpha)&=&\sum_{i=1}^d \alpha_i \mu_i(\{\boldsymbol{1}\})\text{ for all } \alpha=(\alpha_1,\ldots,\alpha_d)\in \mathbb Z_+^d.\eeq  

	Let $\phi$ be a completely alternating function on $\mathbb Z_+^d.$ For
$i=1,\ldots,d$, define its $i$-th coordinate function by
\begin{equation}\label{phi-i-defn}
 \phi^{(i)}(n):=\phi(n\varepsilon(i)),\qquad n\in\mathbb Z_+.
\end{equation}
Then $\phi^{(i)}$ is completely alternating on $\mathbb Z_+$. So by Proposition~\ref{comp-mono-alter}(ii), there exist an additive map $b^{(i)}:\mathbb Z_+\rightarrow \mathbb R_+$ and a positive Radon measure $\nu^{(i)}$ on $[0,1)$ such that 
	\begin{equation}\label{levy-kinchin-one-v}
	\phi^{(i)}(n) = \phi(0) + b^{(i)}(n) + \int_{[0,1)} (1-x^n) \, d\nu^{(i)}(x), \quad  n\geq 0.
	\end{equation}
	Thus $(\phi(0),b^{(i)},\nu^{(i)})$ is the L\'evy triple for $\phi^{(i)},$ $i=1,\ldots,d.$  The function $\phi^{(i)}$ is referred to as the {\it $i$-th coordinate function} of $\phi$ and the corresponding L\'evy measure $\nu^{(i)}$ is called the {\it $i$-th coordinate L\'evy measure} for $\phi.$ Moreover, $\Delta \phi^{(i)}(n)= \phi^{(i)}(n+1)-\phi^{(i)}(n),\; n\in \mathbb Z_+$ is completely monotone. Let $\mu^{(i)}$ denotes the representing measure (as presented in \eqref{hausdorff-moment}) for $\Delta\phi^{(i)}(n).$ Applying the one-variable versions of \eqref{mu-i-nu} and
\eqref{b-alpha}, the equation \eqref{levy-kinchin-one-v} becomes
	\beq\label{one-variable-levy}
	\phi^{(i)}(n) = \phi(0) + n\mu^{(i)}(\{1\}) + \int_{[0,1)} (1-x^n) \, \frac{d\mu^{(i)}(x)}{1-x}, \quad  n\in \mathbb Z_+.
	\eeq 
	
	The following lemma plays a crucial role in the proof of the main result of this section.
	\begin{lemma}\label{bh sum}
		Let $\phi$ be a completely alternating function on $\mathbb Z_+^d$ with L\'evy triple $(\phi(0),b,\nu)$ and let $(\phi(0),b^{(i)},\nu^{(i)})$ be the L\'evy triple for the $i$-th coordinate function $\phi^{(i)}$, $i=1,\ldots,d.$ Then 
		\begin{equation*}
		b(\alpha)= b^{(1)}(\alpha_1)+\ldots+b^{(d)}(\alpha_d),\quad \alpha=(\alpha_1,\ldots,\alpha_d)\in \mathbb Z_+^d.
		\end{equation*}
	\end{lemma}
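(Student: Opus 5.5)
Since $b$ is additive on $\mathbb Z_+^d$, it is determined by its values on the generators: $b(\alpha)=\sum_{i=1}^d \alpha_i\, b(\varepsilon(i))$. Likewise each $b^{(i)}$ is additive on $\mathbb Z_+$, so $b^{(i)}(\alpha_i)=\alpha_i\, b^{(i)}(1)$. The whole lemma therefore reduces to the single identity
\[
b(\varepsilon(i)) = b^{(i)}(1), \qquad i=1,\ldots,d .
\]

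To prove this identity I will use the limit formula \eqref{b-additive} from Proposition~\ref{comp-mono-alter}(ii), applied twice.

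\emph{Multivariable side.} Applying \eqref{b-additive} to $\phi$ at $\alpha=\varepsilon(i)$ gives
\[
b(\varepsilon(i))=\lim_{n\to\infty}\frac{\phi(n\varepsilon(i))}{n}.
\]

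\emph{One-variable side.} Applying the same formula with $d=1$ to the completely alternating function $\phi^{(i)}$ on $\mathbb Z_+$, whose Lévy triple is $(\phi(0),b^{(i)},\nu^{(i)})$, gives
\[
b^{(i)}(1)=\lim_{n\to\infty}\frac{\phi^{(i)}(n)}{n}.
\]

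By the definition \eqref{phi-i-defn}, $\phi^{(i)}(n)=\phi(n\varepsilon(i))$, so the two limits are the same limit. This gives $b(\varepsilon(i))=b^{(i)}(1)$, and summing over $i$ completes the argument.

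The one point needing care is that $b$ in the triple for $\phi$ really is the function given by \eqref{b-additive}. This holds by the uniqueness of Lévy triples noted in the Remark. The limits exist because Proposition~\ref{comp-mono-alter}(ii) asserts it. If one wants to check this directly: for fixed $\alpha$, the integral term $\int (1-x^{n\alpha})\,d\nu/n$ tends to $0$ by dominated convergence, using the bound $(1-x^{n\alpha})/n\le \min\bigl(1/n,\ \sum_j \alpha_j(1-x_j)\bigr)$ together with the integrability of $\min(1,\sum_j(1-x_j))$ against $\nu$.

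An alternative route uses \eqref{b-alpha} and \eqref{mu-i-nu}. Since $\Delta\phi^{(i)}(n)=\Delta_i\phi(n\varepsilon(i))$, one would compare $\mu^{(i)}(\{1\})$ with $\mu_i(\{\mathbf 1\})$. That comparison requires a marginal/pushforward argument, whereas the limit formula avoids it, so I will use the limit formula.
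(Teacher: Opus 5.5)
Your proposal is correct and is essentially the paper's argument: use additivity of $b$ to split along the coordinate directions, apply the limit formula \eqref{b-additive} to both $\phi$ and $\phi^{(i)}$, and identify the two limits via $\phi^{(i)}(n)=\phi(n\varepsilon(i))$. The only difference is cosmetic: you reduce further to the generators $\varepsilon(i)$ and $1$, whereas the paper applies the limit formula directly at $\alpha_i\varepsilon(i)$ and at $\alpha_i$.
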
 
	\begin{proof}
		Since $b$ is additive, for any $\alpha = (\alpha_1,\ldots,\alpha_d)\in \mathbb Z_+^d$,
		\begin{align*}
		b(\alpha) &= b(\alpha_1\varepsilon(1))+\ldots+b(\alpha_d\varepsilon(d))\overset{\eqref{b-additive}}{=}\sum_{i=1}^d\lim_{k\to \infty}\frac{\phi(k\alpha_i\varepsilon(i))}{k}\\
		&\overset{\eqref{phi-i-defn}}{=} \sum_{i=1}^d\lim_{k\to \infty}\frac{\phi^{(i)}(k\alpha_i)}{k}\overset{\eqref{b-additive}}{=} \sum_{i=1}^d b^{(i)}(\alpha_i).
		\end{align*} 
		This completes the proof.
	\end{proof} 
    
	The next result precisely characterizes when the L\'evy measure of a completely alternating function admits a decomposition determined solely by its coordinate L\'evy measures.
	
	\begin{theorem}\label{main-theorem}
		Let $\phi$ be a completely alternating function on $\mathbb Z^d_+$ with L\'evy measure $\nu.$ Let $\phi^{(i)}$ denote the $i$-th coordinate function with L\'evy measure $\nu^{(i)},$ $i=1,\ldots,d.$ Then the following statements are equivalent:
		\begin{itemize}
			\item[$(\mathrm{i})$] for each $\alpha=(\alpha_1,\ldots,\alpha_d)\in \mathbb Z^d_+,$ $\phi$ satisfies 
			\beqn
			\phi(\alpha)=\sum_{i=1}^d \phi^{(i)}(\alpha_i)-(d-1)\phi(0), 
			\eeqn 
			\item[$(\mathrm{ii})$]  the L\'evy measure $\nu$ for $\phi$ is of the form
			\beq\label{nu-sum}
	         \nu=\sum_{i=1}^d \delta_1\times \cdots \times \underbrace{\nu^{(i)}}_{\text{{$i$-th}}} \times \cdots \times \delta_1.
	        \eeq
		\end{itemize}
	\end{theorem}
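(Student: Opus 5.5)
We prove both implications by comparing Lévy–Khinchin representations and invoking the uniqueness of the Lévy triple (see \cite[Theorems~3.7~\&~4.4]{BCR1976}). Throughout, write
\[
\tilde\nu:=\sum_{i=1}^d \delta_1\times \cdots \times \nu^{(i)} \times \cdots \times \delta_1 .
\]
Since each $\nu^{(i)}$ lives on $[0,1)$, the $i$-th summand of $\tilde\nu$ is concentrated on $\{x: x_i\in[0,1),\ x_j=1 \text{ for } j\neq i\}\subset[0,1]^d\setminus\{\mathbf 1\}$. Hence $\tilde\nu$ is a positive Radon measure on $[0,1]^d\setminus\{\mathbf 1\}$. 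It is locally finite there because each $\nu^{(i)}$ is Radon on $[0,1)$, and a compact subset of $[0,1]^d\setminus\{\mathbf 1\}$ meets the $i$-th segment in a compact subset of $[0,1)$.

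\emph{(i)$\Rightarrow$(ii).} We insert \eqref{levy-kinchin-one-v} into (i). For $\alpha\neq0$ this gives
\[
\phi(\alpha)=\phi(0)+\sum_{i=1}^d b^{(i)}(\alpha_i)+\sum_{i=1}^d\int_{[0,1)}(1-t^{\alpha_i})\,d\nu^{(i)}(t).
\]
By the Fubini/product-measure formula, with $\delta_1$ in the other coordinates, we have
\[
\int_{[0,1)}(1-t^{\alpha_i})\,d\nu^{(i)}(t)=\int (1-x^\alpha)\,d(\delta_1\times\cdots\times\nu^{(i)}\times\cdots\times\delta_1)(x),
\]
because $x^\alpha=x_i^{\alpha_i}$ when $x_j=1$ for all $j\ne i$. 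Thus
\[
\phi(\alpha)=\phi(0)+\tilde b(\alpha)+\int (1-x^\alpha)\,d\tilde\nu(x), \qquad \tilde b(\alpha):=\sum_i b^{(i)}(\alpha_i).
\]
Here $\tilde b$ is additive and $\mathbb R_+$-valued. This is a representation of the form \eqref{LK-repn}, so uniqueness of the Lévy triple gives $\nu=\tilde\nu$. As a consistency check, Lemma~\ref{bh sum} also shows $b=\tilde b$.

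\emph{(ii)$\Rightarrow$(i).} Assume $\nu=\tilde\nu$. Then by the same computation
\[
\int(1-x^\alpha)\,d\nu(x)=\sum_i\int_{[0,1)}(1-t^{\alpha_i})\,d\nu^{(i)}(t).
\]
Lemma~\ref{bh sum} gives $b(\alpha)=\sum_i b^{(i)}(\alpha_i)$. Substituting both into \eqref{LK-repn} and regrouping with \eqref{levy-kinchin-one-v} yields
\[
\phi(\alpha)=\phi(0)+\sum_i\bigl(\phi^{(i)}(\alpha_i)-\phi(0)\bigr)=\sum_i\phi^{(i)}(\alpha_i)-(d-1)\phi(0)
\]
for $\alpha\ne0$. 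The case $\alpha=0$ is trivial.

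The only delicate point is in the first implication: one must verify that $\tilde\nu$ is a legitimate Radon measure on $[0,1]^d\setminus\{\mathbf 1\}$, so that the uniqueness theorem applies. Checking integrability of $1-x^\alpha$ against $\tilde\nu$ reduces to the one-variable statement for $\nu^{(i)}$, which holds since $\phi^{(i)}$ is finite. Everything else is bookkeeping with product measures and the Dirac factors.
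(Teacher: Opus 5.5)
Your proof is correct and follows essentially the same route as the paper: both directions compare Lévy--Khinchin representations, with (i)$\Rightarrow$(ii) resting on uniqueness of the Lévy triple and (ii)$\Rightarrow$(i) on substituting the decomposition and using Lemma~\ref{bh sum}. The differences are cosmetic. You exhibit the full triple $(\phi(0),\tilde b,\tilde\nu)$ before invoking uniqueness, whereas the paper first cancels $b$ via Lemma~\ref{bh sum}; and you explicitly check that $\tilde\nu$ is a Radon measure on $[0,1]^d\setminus\{\mathbf 1\}$, which the paper leaves implicit.
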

	\begin{proof}
		$(\mathrm{i})\Rightarrow (\mathrm{ii}):$ Let $\phi$ be a completely alternating function on $\mathbb Z_+^d$ satisfying
		\beqn
			\phi(\alpha)=\sum_{i=1}^d \phi^{(i)}(\alpha_i)-(d-1)\phi(0),\quad \alpha\in \mathbb Z^d_+.
		\eeqn 
		Since each $\phi^{(i)}$ ($i=1,\ldots,d$) is completely alternating, an application of Lemma~\ref{bh sum} together with \eqref{LK-repn} and \eqref{levy-kinchin-one-v} yields
		\beq\label{sum-dnu-i}
		\int_{[0,1]^d \backslash \{\mathbf{1}\}} (1-x^\alpha)d\nu(x)= \sum_{i=1}^d \int_{[0,1)} (1-t^{\alpha_i}) d\nu^{(i)}(t).
		\eeq
		Define $R_i := [0,1]\times\cdots\times\underbrace{[0,1)}_{\text{{$i$-th}}}\times \cdots \times[0,1]$ for $i=1,\ldots,d.$ Then \eqref{sum-dnu-i} becomes
		\beqn
		&&\int_{[0,1]^d \backslash \{\mathbf{1}\}} (1-x^\alpha)d\nu(x)\\&=& \sum_{i=1}^d \int_{R_i} (1-x^{\alpha}) d(\delta_1\times \cdots \times \underbrace{\nu^{(i)}}_{\text{{$i$-th}}} \times \cdots \times \delta_1)(x)\\
		&=& \int_{[0,1]^d \backslash \{\mathbf{1}\}} (1-x^{\alpha}) d\left(\sum_{i=1}^d \delta_1\times \cdots \times \underbrace{\nu^{(i)}}_{\text{{$i$-th}}} \times \cdots \times \delta_1\right)(x).
		\eeqn
		Thus, the uniqueness of the L\'evy triple $(\phi(0),b,\nu)$ of $\phi$ yields \eqref{nu-sum}. 
		
		$(\mathrm{ii})\Rightarrow (\mathrm{i}):$ Let $(\phi(0),b,\nu)$ be the L\'evy triple for $\phi$, i.e., 
		\begin{equation*}
		\phi(\alpha) = \phi(0) + b(\alpha) + \int_{[0,1]^d \backslash \{\mathbf{1}\}} (1-x^\alpha) \, d\nu(x), \quad \alpha \in \mathbb{Z}^d_+ \backslash \{0\}.
		\end{equation*} 
	 Substituting \eqref{nu-sum} in the above equation and using Lemma~\ref{bh sum} gives
		\beqn
		\phi(\alpha)&=& \phi(0)+ b(\alpha) + \int_{[0,1]^d \backslash \{\mathbf{1}\}} (1-x^\alpha) \,  d\left(\sum_{i=1}^d \delta_1\times \cdots \times \underbrace{\nu^{(i)}}_{\text{{$i$-th}}} \times \cdots \times \delta_1\right)(x)\\
		&=&\phi(0)+ \sum_{i=1}^d b^{(i)}(\alpha_i)+\sum_{i=1}^d  \int_{[0,1)} (1-x_i^{\alpha_i}) d\nu^{(i)}(x_i)\\
		&=& \sum_{i=1}^d\phi^{(i)}(\alpha_i)-(d-1)\phi(0).
		\eeqn
		Hence the result.
	\end{proof}
    \begin{remark}
         The decomposition \eqref{nu-sum} is closely related to the following fact: if $(X_t, Y_t)$ is a non-Gaussian L\'evy process with L\'evy measure $\nu$, then $X_t$ and $Y_t$ are independent if and only if they do not jump simultaneously. In this case, $$\nu = \nu_X \times \delta_0+\delta_0 \times \nu_Y,$$ where $\nu_X$ and $\nu_Y$ are L\'evy measures of $X_t$ and $Y_t$, respectively; see \cite[Proposition 5.3]{CP2004} for further details.
    \end{remark}
	The result below shows that pairwise defect zero for a completely hyperexpansive operator $d$-tuple forces the associated L\'evy measure to have a decomposition similar to \eqref{nu-sum}.
	\begin{theorem}\label{main-thm-new}
		Let ${\bf T}=(T_1,\ldots,T_d)$  be a completely hyperexpansive operator $d$-tuple on $\mathcal H.$ Then  
        \beq\label{defect-i-j}		
		D(T_i,T_j)=I-T_i^*T_i-T_j^*T_j+T_i^*T_j^*T_iT_j=0,\quad 1\leq i\neq j \leq d,
		\eeq
		 if and only if for each $h\in \mathcal H,$ the L\'{e}vy measure $\nu_{({\bf T},h)}$ for $(\mathbf{T}, h)$ decomposes as 
		\begin{equation}\label{nu-prod}
		\nu_{(\mathbf T,h)}
=\sum_{i=1}^d
\delta_1\times\cdots\times
\underbrace{\nu_{(T_i,h)}}_{\text{$i$-th}}
\times\cdots\times\delta_1,
		\end{equation}
		where $\nu_{(T_i,h)}$ denotes the L\'evy measure for $(T_i,h).$ 
	\end{theorem}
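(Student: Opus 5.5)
The plan is to reduce Theorem~\ref{main-thm-new} to Theorem~\ref{main-theorem}, applied to $\phi=\phi_{({\bf T},h)}$ for each fixed $h\in\mathcal H$. Since ${\bf T}$ is completely hyperexpansive, $\phi_{({\bf T},h)}(\alpha)=\|{\bf T}^\alpha h\|^2$ is completely alternating. Its $i$-th coordinate function is $\phi^{(i)}(n)=\|T_i^nh\|^2$, whose L\'evy measure is exactly $\nu_{(T_i,h)}$ from \eqref{eq_onevarible}. So, by Theorem~\ref{main-theorem}, \eqref{nu-prod} holds for every $h$ if and only if
\begin{equation*}
\|{\bf T}^\alpha h\|^2=\sum_{i=1}^d\|T_i^{\alpha_i}h\|^2-(d-1)\|h\|^2,\qquad \alpha\in\mathbb Z_+^d,\ h\in\mathcal H.
\end{equation*}
Since these are quadratic forms of self-adjoint operators on a complex Hilbert space, this is in turn equivalent to the operator identity
\begin{equation*}
{\bf T}^{*\alpha}{\bf T}^\alpha=\sum_{i=1}^d T_i^{*\alpha_i}T_i^{\alpha_i}-(d-1)I,\qquad \alpha\in\mathbb Z_+^d. \tag{$\ast$}
\end{equation*}
It therefore remains to show that \eqref{defect-i-j} is equivalent to $(\ast)$.

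\emph{$(\ast)\Rightarrow$ \eqref{defect-i-j}.} Take $\alpha=\varepsilon(i)+\varepsilon(j)$ with $i\neq j$. Then $(\ast)$ reads $T_i^*T_j^*T_iT_j=T_i^*T_i+T_j^*T_j-I$, which is precisely $D(T_i,T_j)=0$ (using $T_iT_j=T_jT_i$).

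\emph{\eqref{defect-i-j} $\Rightarrow(\ast)$.} This direction carries the real work. For $i\neq j$, rewrite $D(T_i,T_j)=0$ as $T_j^*A_iT_j=A_i$, where $A_i:=T_i^*T_i-I$. Next, set $B_i(n):=T_i^{*n}T_i^n-I$. Telescoping gives
\begin{equation*}
B_i(n)=\sum_{k=0}^{n-1}T_i^{*k}A_iT_i^k.
\end{equation*}
Because $T_j$ commutes with $T_i$, we get $T_j^*T_i^{*k}A_iT_i^kT_j=T_i^{*k}(T_j^*A_iT_j)T_i^k=T_i^{*k}A_iT_i^k$. Hence $T_j^*B_i(n)T_j=B_i(n)$, and iterating gives
\begin{equation*}
T_j^{*m}B_i(n)T_j^m=B_i(n)\qquad\text{for all } m,n\in\mathbb Z_+ .
\end{equation*}

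With this invariance, prove $(\ast)$ by induction on $d$, i.e.\ on the number of coordinates. Write $\alpha=(\alpha',\alpha_d)$ and ${\bf T}'=(T_1,\ldots,T_{d-1})$, so that ${\bf T}^{*\alpha}{\bf T}^\alpha=T_d^{*\alpha_d}\bigl({\bf T}'^{*\alpha'}{\bf T}'^{\alpha'}\bigr)T_d^{\alpha_d}$. Apply the induction hypothesis in the form ${\bf T}'^{*\alpha'}{\bf T}'^{\alpha'}=I+\sum_{i<d}B_i(\alpha_i)$. Conjugating by $T_d^{\alpha_d}$ fixes each $B_i(\alpha_i)$ and sends $I$ to $T_d^{*\alpha_d}T_d^{\alpha_d}=I+B_d(\alpha_d)$. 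This yields ${\bf T}^{*\alpha}{\bf T}^\alpha=I+\sum_{i\le d}B_i(\alpha_i)$, which is $(\ast)$.

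The main obstacle is the passage from the single-step relation $T_j^*A_iT_j=A_i$ to arbitrary powers in both variables. This is where commutativity of the tuple is essential. The telescoping expression for $B_i(n)$ is the device I would use to handle it; the remaining steps are bookkeeping.
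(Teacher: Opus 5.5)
Your proof is correct and takes the paper's route. You reduce to Theorem~\ref{main-theorem} for each fixed $h$, pass to the operator identity of Lemma~\ref{induction-lemma}, and get the converse direction by taking $\alpha=\varepsilon(i)+\varepsilon(j)$. The remaining differences are only bookkeeping: the paper proves the same invariance $T_i^*(T_j^{*m}T_j^m-I)T_i=T_j^{*m}T_j^m-I$ by induction on $m$ and then inducts on $|\alpha|$ rather than on the number of coordinates, and it leaves implicit the polarization step from quadratic forms to the operator identity, which you state explicitly.
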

    Before proving Theorem~\ref{main-thm-new}, let us prove the following lemma.
	\begin{lemma}\label{induction-lemma}
	Let ${\bf T}=(T_1,\ldots,T_d)$ be a commuting $d$-tuple on $\mathcal H.$ Then ${\bf T}$ satisfies \eqref{defect-i-j} if and only if 
	\beq\label{induction}
	{\bf T}^{*\alpha}{\bf T}^{\alpha}=\sum_{i=1}^dT^{*\alpha_i}_iT^{\alpha_i}_i-(d-1)I,\quad \alpha\in \mathbb Z^d_+.
	\eeq
	\end{lemma}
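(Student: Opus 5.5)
The converse direction is immediate: if \eqref{induction} holds for all $\alpha\in\mathbb Z_+^d$, take $\alpha=\varepsilon(i)+\varepsilon(j)$ with $i\neq j$. Then \eqref{induction} gives
\[
T_i^*T_j^*T_iT_j = T_i^*T_i+T_j^*T_j+(d-2)I-(d-1)I = T_i^*T_i+T_j^*T_j-I,
\]
which is exactly $D(T_i,T_j)=0$. Here we use commutativity to write ${\bf T}^{*\alpha}{\bf T}^\alpha=T_i^*T_j^*T_iT_j$, and the fact that the remaining $d-2$ coordinates of $\alpha$ are zero, so each of them contributes a summand $I$.

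For the forward direction, assume \eqref{defect-i-j} and prove \eqref{induction} in two stages. First, for a fixed pair $i\neq j$, show by a double induction that
\[
T_i^{*m}T_j^{*n}T_i^mT_j^n = T_i^{*m}T_i^m+T_j^{*n}T_j^n-I \quad\text{for all } m,n\ge0 .
\]
\begin{itemize}
\item Fix $m=1$ and induct on $n$. Write $T_i^*T_j^{*(n+1)}T_iT_j^{n+1}=T_j^{*n}\,(T_i^*T_j^*T_iT_j)\,T_j^n$, using commutativity to move $T_i,T_i^*$ inside. Substituting the defect identity turns this into
\[
T_j^{*n}T_i^*T_iT_j^n + T_j^{*(n+1)}T_j^{n+1} - T_j^{*n}T_j^n .
\]
The induction hypothesis rewrites the first term as $T_i^*T_i+T_j^{*n}T_j^n-I$, and the $T_j^{*n}T_j^n$ terms cancel, giving the claim for $n+1$.
\item Then induct on $m$ in the same way, conjugating the $m=1$ identity by $T_i^m$: write $T_i^{*(m+1)}T_j^{*n}T_i^{m+1}T_j^n=T_i^{*m}\bigl(T_i^*T_j^{*n}T_iT_j^n\bigr)T_i^m$, apply the $m=1$ case, and then the induction hypothesis.
\end{itemize}

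Second, extend to $d$ coordinates by induction on the number of nonzero coordinates of $\alpha$, or equivalently on $d$. Suppose
\[
{\bf T}'^{*\alpha'}{\bf T}'^{\alpha'}=\sum_{i<d}T_i^{*\alpha_i}T_i^{\alpha_i}-(d-2)I
\]
holds for the first $d-1$ operators. Write ${\bf T}^{*\alpha}{\bf T}^\alpha=T_d^{*\alpha_d}\bigl({\bf T}'^{*\alpha'}{\bf T}'^{\alpha'}\bigr)T_d^{\alpha_d}$ and substitute this hypothesis. Each resulting term $T_d^{*\alpha_d}T_i^{*\alpha_i}T_i^{\alpha_i}T_d^{\alpha_d}$ is then replaced using the two-variable identity. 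This gives
\[
\sum_{i<d}\bigl(T_i^{*\alpha_i}T_i^{\alpha_i}+T_d^{*\alpha_d}T_d^{\alpha_d}-I\bigr)-(d-2)T_d^{*\alpha_d}T_d^{\alpha_d},
\]
which simplifies to $\sum_{i\le d}T_i^{*\alpha_i}T_i^{\alpha_i}-(d-1)I$.

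No step is a real obstacle; the only care needed is the bookkeeping of commutativity when moving $T_i$ past $T_j^{*}$. This is why the argument only ever conjugates by powers of a single operator, avoiding any need for $T_iT_j^*=T_j^*T_i$, which is not assumed.
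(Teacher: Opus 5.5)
Your proof is correct, but it organizes the forward direction differently from the paper. Your first induction (the case $m=1$, inducting on $n$) is the paper's auxiliary identity $T_i^*T_j^{*m}T_iT_j^m=T_i^*T_i+T_j^{*m}T_j^m-I$, obtained by the same conjugation trick. The paper conjugates the induction hypothesis by $T_j$ and then applies \eqref{defect-i-j}; you conjugate \eqref{defect-i-j} by $T_j^n$ and then apply the hypothesis.

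After that the routes diverge. The paper never passes to general powers of $T_i$. It runs a strong induction on $|\alpha|$, peeling off a single factor $T_i$ with $\alpha_i\ge 1$. Every cross term then has the form $T_i^*T_j^{*\alpha_j}T_j^{\alpha_j}T_i$, so only the $m=1$ identity is ever used. You instead do a second induction to get the full two-variable identity $T_i^{*m}T_j^{*n}T_i^mT_j^n=T_i^{*m}T_i^m+T_j^{*n}T_j^n-I$, which is the $d=2$ case of the lemma. You then induct on the number of operators, peeling off the whole block $T_d^{\alpha_d}$.

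The paper's route is slightly more economical: one auxiliary induction and no passage to sub-tuples. Yours isolates the two-variable statement as the real content and makes the extension to $d$ operators pure bookkeeping.

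In the induction on $d$, state the base case and note that $\mathbf T'$ inherits the pairwise conditions \eqref{defect-i-j}; both are trivial. Your closing remark is also slightly misworded. You never move $T_i$ past $T_j^*$ at all, since every rearrangement uses only $T_iT_j=T_jT_i$ or its adjoint $T_i^*T_j^*=T_j^*T_i^*$.
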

	\begin{proof} $(\Leftarrow)$ Follows by taking $\alpha=\varepsilon(i)+\varepsilon(j)$  $(1\leq i\neq j\leq d)$ in \eqref{induction}.
	
	$(\Rightarrow)$ Using induction we first show that for any $m\geq 0,$
	\beq\label{defect-i-j-m}
	T_i^*T^{*m}_jT_iT^m_j=T_i^*T_i+T^{*m}_jT^m_j-I, \; 1\leq i\neq j\leq d.
	\eeq 
	Clearly, \eqref{defect-i-j-m} is true for $m=0.$ Let us assume that \ref{defect-i-j-m} is true for $m.$ Now for $1\leq i\neq j\leq d,$
	\beqn
	T_i^*T^{*m+1}_jT_iT^{m+1}_j&=&T^*_j(T_i^*T^{*m}_jT_iT^{m}_j)T_j =T^*_j(T_i^*T_i+T^{*m}_jT^m_j-I)T_j\\
	&=& T_i^*T^*_jT_iT_j+T^{*m+1}_jT^{m+1}_j-T^*_jT_j\\
	&\overset{\eqref{defect-i-j}}{=}& T_i^*T_i+T^{*m+1}_jT^{m+1}_j-I
	\eeqn
	Thus, by induction \eqref{defect-i-j-m} is true for all $m\geq 0.$ To prove \eqref{induction}, we proceed by strong induction on $|\alpha|=\sum_{i=1}^d\alpha_i.$ Note that \eqref{induction} holds for $0\leq |\alpha| \leq 1.$ Let us assume that \eqref{induction} holds for $0\leq |\alpha| \leq n.$ Suppose that $|\alpha|=n+1.$ Choose $i\in \{1,\ldots,d\}$ such that $\alpha_i\geq 1.$ Then by the induction hypothesis 
	\beqn
	{\bf T}^{*\alpha}{\bf T}^{\alpha}&=&T_i^*({\bf T}^{*\alpha-\varepsilon(i)}{\bf T}^{\alpha-\varepsilon(i)})T_i\\
	&=&T^{*\alpha_i}_iT^{\alpha_i}_i+\sum_{\substack{j=1\\j\neq i}}^dT_i^*T^{*\alpha_j}_jT^{\alpha_j}_jT_i-(d-1)T_i^*T_i\\
	&\overset{\eqref{defect-i-j-m}}{=}&T^{*\alpha_i}_iT^{\alpha_i}_i+ \sum_{\substack{j=1\\j\neq i}}^d(T_i^*T_i+T^{*\alpha_j}_jT^{\alpha_j}_j-I)-(d-1)T_i^*T_i\\
	&=& \sum_{j=1}^d T^{*\alpha_j}_jT^{\alpha_j}_j-(d-1)I.
	\eeqn
	Thus by induction \eqref{induction} holds for each $\alpha\in \mathbb Z^d_+.$	
	\end{proof}
With Lemma~\ref{induction-lemma} in hand, let us prove Theorem~\ref{main-thm-new}.

    \begin{proof}[{\bf Proof of Theorem~\ref{main-thm-new}}]
For $h\in\mathcal H$, consider the function $\phi_{({\bf T},h)}: \mathbb{Z}_+^d \to \mathbb{R}$ given by
		$$\phi_{({\bf T},h)}(\alpha) := \|\mathbf{T}^\alpha h\|^2, \quad \alpha \in \mathbb{Z}_+^d.$$
Since $\mathbf T$ is completely hyperexpansive,
$\phi_{(\mathbf T,h)}$ is completely alternating. Its $i$-th coordinate
function is
\begin{equation*}
		\phi_{({\bf T},h)}^{(i)}(\alpha_i)=\|T_i^{\alpha_i} h\|^2\overset{\eqref{eq_onevarible}}{=}\|h\|^2+b_{(T_i,h)}(\alpha_i) +\int_{[0,1)}(1-x^{\alpha_i})d\nu_{(T_i,h)}(x).
		\end{equation*}
		Thus $\nu_{(T_i,h)}$ becomes the L\'evy measure for $\phi_{({\bf T},h)}^{(i)}$ for $i=1,2,\ldots,d.$
By Lemma~\ref{induction-lemma}, the identities
$D(T_i,T_j)=0$ for $i\neq j$ are equivalent to
\[
 \phi_{(\mathbf T,h)}(\alpha)
 =\sum_{i=1}^d
 \phi_{(\mathbf T,h)}^{(i)}(\alpha_i)
 -(d-1)\phi_{(\mathbf T,h)}(0)
\]
for every $\alpha\in\mathbb Z_+^d$ and every $h\in\mathcal H$.
The conclusion now follows directly from
Theorem~\ref{main-theorem}.
\end{proof}

	\section{Proof of Theorem~\ref{d-mu-1-2-decomp}}\label{dirichlet-space}
    Following \cite{A1975, M1988} (cf.~\cite{D1951}), for a finite positive Borel measure $\rho$ on the closed unit disc $\overline{\mathbb D}$, we define the \emph{$(i,j)$-th moment} of $\rho$ by
	\begin{equation}\label{moment-def}
	\widehat{\rho}(i,j)=\int_{\overline{\mathbb D}}\overline z^{i} z^{j} d\rho(z).
	\end{equation}
Let $\mathcal O(\mathbb D)$ denote the space of holomorphic functions on $\mathbb D$. Let $H^2(\mathbb D)=
\{f\in\mathcal O(\mathbb D):
\|f\|^2=\sum_{n=0}^\infty|\widehat f(n)|^2<\infty\}$ be the Hardy space of the unit disc. For a finite positive Borel measure $\rho$ on $\overline{\mathbb D}$, Aleman \cite[Chapter~IV]{A1993} introduced the  following notion of {\it Dirichlet-type spaces} $\mathcal D(\rho)$ on $\mathbb D:$
	\beqn
	\mathcal D(\rho):=\left\{f\in H^2(\mathbb D):\int_{\mathbb D}|f'(z)|^2U_{\rho}(z)\;dA(z)<\infty \right\},
	\eeqn
	where $U_{\rho}(z)=\int_{\mathbb D}\log \left |\frac{1-\overline{\zeta}z}{z-\zeta}\right |^2\frac{d\rho(\zeta)}{1-|\zeta|^2}+\int_{\mathbb T}\frac{1-|z|^2}{|\zeta-z|^2}d\rho(\zeta),$   $z\in \mathbb D$ is a superharmonic weight function, and dA is the normalized Lebesgue area measure on $\mathbb D$. The space $\mathcal D(\rho)$ is equipped with the norm
	\begin{equation*}
	\|f\|^2_{\mathcal D(\rho)}:=\|f\|^2_{H^2(\mathbb D)}+\int_{\mathbb D}|f'(z)|^2U_{\rho}(z)\;dA(z),\quad f\in \mathcal D(\rho).
	\end{equation*}
	These spaces have been extensively studied in \cite{A1993, EKKMR2016, GNS2018} (see also \cite{EKMR2014, R1991, RS1991}). In particular, in \cite[Theorem~1.10(i), p~76]{A1993}, it is shown that the multiplication operator $\mathscr M_z$ by the coordinate function $z$ on $\mathcal D(\rho)$ is a cyclic analytic completely hyperexpansive operator. Moreover, any cyclic analytic completely hyperexpansive operator is unitarily equivalent to $\mathscr M_z$ on $\mathcal D(\rho)$  for some finite positive Borel measure $\rho$ supported on $\overline{\mathbb D}$  (see \cite[Theorem~2.5, p.~79]{A1993}). Below we observe a relation between $\rho$ and the L\'evy measure for the pair $(\mathscr M_z, 1).$	
	\begin{lemma}\label{inner-prod}
		Let $\rho$ be a finite positive Borel measure on $\overline{\mathbb D}.$ For each $n\geq 0,$ define $\phi_{(\mathscr M_z, 1)} (n):= \|\mathscr M_z^n1\|^2_{\mathcal D(\rho)}.$ If $\mu_{(\mathscr M_z,1)}$ denotes the representing measure of
$\Delta\phi_{(\mathscr M_z,1)}$, and $S(z)=|z|^2$ for $z\in \overline{\mathbb D},$  then $\mu_{(\mathscr M_z, 1)}=S_{*}\rho.$
	\end{lemma}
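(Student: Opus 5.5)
The plan is to compute $\phi(n):=\|z^n\|^2_{\mathcal D(\rho)}$ explicitly and read off its forward difference as a Hausdorff moment sequence. The key input is the known formula for the local Dirichlet integral of monomials. Since $U_\rho$ is obtained by integrating weights against $\rho$, Fubini lets us write
\[
\int_{\mathbb D}|f'(z)|^2U_\rho(z)\,dA(z)=\int_{\overline{\mathbb D}} D_\zeta(f)\,d\rho(\zeta).
\]
Here $D_\zeta(f)$ is the local Dirichlet integral at $\zeta$ for $\zeta\in\mathbb T$. For $\zeta\in\mathbb D$ it is the corresponding quantity with weight $\frac{1}{1-|\zeta|^2}\log\left|\frac{1-\overline\zeta z}{z-\zeta}\right|^2$. (Aleman's normalisation is chosen so that the disc part matches the circle part.)

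The first step is to establish, for each $\zeta\in\overline{\mathbb D}$, the identity
\[
\int_{\mathbb D}|f'(z)|^2\,w_\zeta(z)\,dA(z)=\left\|\frac{f-f(\zeta)}{z-\zeta}\right\|^2_{H^2},
\]
which is the Richter--Sundberg formula in the boundary case and its extension by Aleman in the interior case. Applying it to $f=z^n$ gives $(z^n-\zeta^n)/(z-\zeta)=\sum_{k=0}^{n-1}\zeta^{n-1-k}z^k$, so
\[
D_\zeta(z^n)=\sum_{k=0}^{n-1}|\zeta|^{2k}.
\]
Since $\|z^n\|_{H^2}=1$, this yields
\[
\phi(n)=1+\int_{\overline{\mathbb D}}\sum_{k=0}^{n-1}|\zeta|^{2k}\,d\rho(\zeta).
\]
If one prefers to avoid the general formula, the same result follows from a direct polar-coordinate computation of $\int|nz^{n-1}|^2U_\rho\,dA$. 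That computation uses the Fourier expansions of the Poisson kernel and of the Green function $\log|(1-\overline\zeta z)/(z-\zeta)|^2$, integrated radially against $n^2r^{2n-2}$.

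The second step is immediate. We get
\[
\Delta\phi(n)=\phi(n+1)-\phi(n)=\int_{\overline{\mathbb D}}|\zeta|^{2n}\,d\rho(\zeta)=\int_{[0,1]}t^n\,d(S_*\rho)(t),
\]
by the pushforward identity \eqref{push-forward-int} with $g(t)=t^n$. Thus $\Delta\phi$ is the Hausdorff moment sequence of the finite measure $S_*\rho$ on $[0,1]$. Uniqueness of the representing measure in Proposition~\ref{comp-mono-alter}(i), which holds because polynomials are dense in $C[0,1]$, then gives $\mu_{(\mathscr M_z,1)}=S_*\rho$.

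The main obstacle is the first step, specifically the interior part. There one must verify carefully that the Green-potential weight, normalised by $1/(1-|\zeta|^2)$, produces exactly $\sum_{k<n}|\zeta|^{2k}$. A direct check is available: the Green function $G(z,\zeta)=\log|(1-\overline\zeta z)/(z-\zeta)|^2$ satisfies $\int_{\mathbb D}|z|^{2m}G(z,\zeta)\,dA(z)=(1-|\zeta|^{2m+2})/(m+1)^2$, which follows from Green's formula since $\Delta G=-4\pi\delta_\zeta$. Taking $m=n-1$ and multiplying by $n^2/(1-|\zeta|^2)$ gives $(1-|\zeta|^{2n})/(1-|\zeta|^2)$, as required. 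The boundary case reduces to $\int|nz^{n-1}|^2P_\zeta\,dA=n^2\int_0^1 r^{2n-2}\,2r\,dr=n$, since the Poisson kernel has angular mean $1$. This is consistent with the interior formula evaluated at $|\zeta|=1$.
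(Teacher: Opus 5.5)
Your proposal is correct and follows essentially the same route as the paper. Both arguments take $\|z^n\|^2_{\mathcal D(\rho)}=1+\sum_{k=0}^{n-1}\widehat{\rho}(k,k)$, identify $\Delta\phi_{(\mathscr M_z,1)}(n)$ with $\widehat{\rho}(n,n)=\int_{[0,1]}t^n\,d(S_*\rho)(t)$, and conclude by uniqueness in the Hausdorff moment problem. The differences are minor: you derive the monomial norm formula yourself (your local Dirichlet integral and Green's formula computations check out), whereas the paper quotes it from \cite[Proposition~3.1]{B2025}; and you take the forward difference directly, whereas the paper passes through the L\'evy--Khinchin form before doing the equivalent backward substitution.
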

	\begin{proof}
		We first show that 
		\beq\label{mu-hat-rho-hat}
		\widehat{\rho}(n,n)=\widehat{\mu}_{(\mathscr M_z, 1)}(n), \quad n\in \mathbb Z_+,
		\eeq
		where $\widehat{\mu}_{(\mathscr M_z, 1)}(n)= \int_{[0,1]}x^n d\mu_{(\mathscr M_z, 1)}(x)$.
		By \cite[Proposition 3.1]{B2025}, the norm of the monomials in $\mathcal{D}(\rho)$ is given by
		\beq\label{norm-D-sigma}
		\|z^n\|^2_{\mathcal D(\rho)}&=&1+\sum_{k=0}^{n-1}\widehat{\rho}(k,k),\quad n\geq 1.
		\eeq
On the other hand, 
		\beq\label{mu-hat}
		\phi_{(\mathscr M_z, 1)} (n)&=& 1+n\mu_{(\mathscr M_z, 1)}(\{1\})+ \int_{[0,1)} (1-x^n)\frac{d\mu_{(\mathscr M_z, 1)}(x)}{1-x}\notag \\
		\|\mathscr M_z^n1\|^2_{\mathcal D(\rho)}&=&1 +\sum_{k=0}^{n-1}\int_{[0,1]}x^{k}d\mu_{(\mathscr M_z, 1)}(x)\notag \\
		\|z^n\|^2_{\mathcal D(\rho)}&=&1 +\sum_{k=0}^{n-1}\widehat{\mu}_{(\mathscr M_z, 1)}(k),\qquad n\geq 0.
		\eeq
		Combining \eqref{norm-D-sigma}and \eqref{mu-hat} yields
		\beqn
		\sum_{k=0}^{n-1}\widehat{\rho}(k,k)=\sum_{k=0}^{n-1} \widehat{\mu}_{(\mathscr M_z, 1)}(k),\quad \text{ for all } n\geq 1.
		\eeqn
		Since the above equation is true for each $n\geq 1,$ using backward substitution we get \eqref{mu-hat-rho-hat}. 
		For $n\geq 0,$ consider $g:[0,1]\rightarrow \mathbb R$ as $g(t)=t^n$ for $t\in [0,1].$ Then 
		\beqn
		\widehat{\mu}_{(\mathscr M_z, 1)}(n)&=&\widehat{\rho}(n,n)\overset{\eqref{moment-def}}{=}\int_{\overline{\mathbb D}}|z|^{2n} d\rho(z)=\int_{\overline{\mathbb D}}(g\circ S)(z)d\rho(z)\\
		&\overset{\eqref{push-forward-int}}{=}&\int_{[0,1]} g(t)d(S_{*}\rho)(t)= \int_{[0,1]} t^nd(S_{*}\rho)(t)=\widehat{S_{*}\rho}(n).
		\eeqn
		Since the two measures have the same moments, uniqueness in the
Hausdorff moment problem yields $\mu_{(\mathscr M_z, 1)}=S_{*}\rho$ on $[0,1].$
	\end{proof}
	\begin{lemma}\label{lebesgue-coro}
Let $\sigma$ be the normalized Lebesgue area measure on
$\overline{\mathbb D}$. Then $S_*\sigma$ is the Lebesgue measure $m$ on
$[0,1]$.
\end{lemma}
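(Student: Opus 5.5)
We identify the two measures through their moments and then invoke uniqueness in the Hausdorff moment problem, in the same spirit as the end of the proof of Lemma~\ref{inner-prod}. Since $S_*\sigma$ is a finite positive Borel measure on $[0,1]$, and a finite measure on a compact interval is determined by its moments, it suffices to show that
\[
\int_{[0,1]} t^n\, d(S_*\sigma)(t)=\int_{[0,1]} t^n\,dt=\frac{1}{n+1},\qquad n\in\mathbb Z_+ .
\]

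\textbf{Step 1: rewrite the moments on the disc.} For each $n\ge 0$ take $g(t)=t^n$ on $[0,1]$ and apply \eqref{push-forward-int}. This gives
\[
\int_{[0,1]}t^n\,d(S_*\sigma)(t)=\int_{\overline{\mathbb D}}|z|^{2n}\,d\sigma(z).
\]

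\textbf{Step 2: compute in polar coordinates.} Since $\sigma$ is normalized area measure, $d\sigma=\frac{1}{\pi}\,r\,dr\,d\theta$. The unit circle has area zero, so it contributes nothing. Hence
\[
\int_{\overline{\mathbb D}}|z|^{2n}\,d\sigma(z)=\frac{1}{\pi}\int_0^{2\pi}\!\!\int_0^1 r^{2n+1}\,dr\,d\theta=\frac{2}{2n+2}=\frac{1}{n+1}.
\]
This equals the $n$-th moment of Lebesgue measure $m$ on $[0,1]$.

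\textbf{Step 3: conclude by uniqueness.} The measures $S_*\sigma$ and $m$ have the same moments, so uniqueness in the Hausdorff moment problem (Proposition~\ref{comp-mono-alter}(i)) gives $S_*\sigma=m$.

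As a cross-check, one can also argue directly on intervals: for $0\le a<b\le 1$,
\[
S_*\sigma([a,b])=\sigma\bigl(\{\sqrt a\le |z|\le \sqrt b\}\bigr)=b-a .
\]
The intervals form a $\pi$-system generating the Borel sets, so this also identifies the measures.

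There is no real obstacle here. The only points needing a word of justification are that the boundary circle is $\sigma$-null and that $\sigma$ is normalized so that $\sigma(\overline{\mathbb D})=1$.
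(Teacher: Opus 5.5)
Your proof is correct and matches the paper's argument: you push forward $t^n$ via the change-of-variables formula, compute $\int_{\overline{\mathbb D}}|z|^{2n}\,d\sigma(z)=\frac{1}{n+1}$ in polar coordinates, and conclude by uniqueness in the Hausdorff moment problem. Your extra interval computation $S_*\sigma([a,b])=b-a$, combined with the $\pi$-system argument, is also valid and gives a more elementary route that avoids moment uniqueness altogether.
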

	\begin{proof} 
		Note that for each $k\geq 0$,
		\beqn
	 \widehat{\sigma}(k,k)= \int_{\overline{\mathbb D}} |z|^{2k} d\sigma(z)=\frac{1}{\pi}\int_{\theta=0}^{2\pi}\int_{r=0}^1 r^{2k+1} dr d\theta=\frac{1}{k+1}=\widehat{m}(k).
		\eeqn
		For $k\geq 0,$ consider $g:[0,1]\rightarrow \mathbb R$ as $g(t)=t^k$ for $t\in [0,1].$ Then
		\beqn
		\widehat{S_{*}\sigma}(k)=\int_{[0,1]}g(t) d(S_{*}\sigma)(t)\overset{\eqref{push-forward-int}}{=}\int_{\overline{\mathbb D}} (g\circ S)(z) d\sigma(z)=\int_{\overline{\mathbb D}} |z|^{2k} d\sigma(z)=\widehat{\sigma}(k,k).
		\eeqn
		Finally, combining the above equations and using the uniqueness of the Hausdorff moment sequence yields $S_{*}\sigma=m.$
	\end{proof}

	Now we are ready to prove the main theorem.
	\begin{proof}[{\bf Proof of Theorem~\ref{d-mu-1-2-decomp}}]
		Note that the multiplication pair $\mathscr M_{z}=(\mathscr M_{z_1},\mathscr M_{z_2})$ on $\mathcal D({\rho^{(1)},\rho^{(2)}})$ is a completely hyperexpansive pair with defect zero (see \cite[Lemma~3.10]{B2025}). 
        Thus, by Theorem~\ref{main-thm-new}, the L\'evy measure for $(\mathscr M_{z}, 1)$ has the following decomposition
        \beq\label{nu-mz-nu-mz-i}
        \nu_{(\mathscr M_{z}, 1)}=\nu_{(\mathscr M_{z_1}, 1)}\times \delta_1+ \delta_1\times \nu_{(\mathscr M_{z_2}, 1)}.\eeq
        By \cite[Proposition~3.2]{B2025},
\[
 \|z_i^n\|_{\mathcal D(\rho^{(1)},\rho^{(2)})}^2
 =\|z^n\|_{\mathcal D(\rho^{(i)})}^2,
 \qquad n\in\mathbb Z_+.
\]
Hence Lemma~\ref{inner-prod} gives
$\mu_{(\mathscr M_{z_i},1)}=S_*\rho^{(i)},
 \qquad i=1,2.$
Using the one-variable relation between the representing and
L\'evy measures (see one-variable version of \eqref{mu-i-nu}), we obtain
\[
 d\nu_{(\mathscr M_{z_i},1)}(t)
 =\frac{1}{1-t}\,d(S_*\rho^{(i)})(t),
 \qquad t\in[0,1).
\]
Substitution into \eqref{nu-mz-nu-mz-i} now yields
\eqref{levy-k-diri}.
	\end{proof}

	The following is an immediate consequence of the above result.
	\begin{corollary}
		Let $\sigma$ be the normalized Lebesgue measure on the closed unit disc $\overline{\mathbb D}.$ Then the L\'evy measure $\nu_{(\mathscr M_z,1)}$ for the tuple $((\mathscr M_{z_1},\mathscr M_{z_2}),1)$ on $\mathcal D(\sigma, \sigma)$ is given by
		\begin{equation*}
		d\nu_{(\mathscr M_z,1)}(x)=\frac{1}{1-x_1}d(m\times\delta_1)(x)+\frac{1}{1-x_2}d(\delta_1\times m)(x) 
		\end{equation*}	
		on $[0,1]^2\backslash\{(1,1)\},$ where $m$ is the normalized Lebesgue measure on $[0,1].$
	\end{corollary}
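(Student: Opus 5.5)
This corollary is the special case $\rho^{(1)}=\rho^{(2)}=\sigma$ of Theorem~\ref{d-mu-1-2-decomp}, so the plan is a direct substitution. First, $\sigma$ (normalized area measure, regarded as a measure on $\overline{\mathbb D}$ that gives no mass to $\mathbb T$) is a finite positive Borel measure on $\overline{\mathbb D}$. Hence Theorem~\ref{d-mu-1-2-decomp} applies to $\mathcal D(\sigma,\sigma)$. It yields
\[
d\nu_{(\mathscr M_z,1)}(x)=\frac{1}{1-x_1}\,d((S_*\sigma)\times\delta_1)(x)+\frac{1}{1-x_2}\,d(\delta_1\times(S_*\sigma))(x)
\]
on $[0,1]^2\backslash\{(1,1)\}$, with $S(z)=|z|^2$.

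Second, Lemma~\ref{lebesgue-coro} identifies $S_*\sigma$ with the Lebesgue measure $m$ on $[0,1]$. That lemma compares the moments $\int |z|^{2k}\,d\sigma=\frac{1}{k+1}$ with $\int_0^1 t^k\,dt$ and then invokes Hausdorff moment uniqueness. Substituting $S_*\sigma=m$ into both product terms above gives exactly the stated formula.

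The only point needing care is that the densities $\frac{1}{1-x_i}$ are unbounded near $x_i=1$, so one should check that the resulting $\nu$ is a genuine positive Radon measure on $[0,1]^2\backslash\{(1,1)\}$. This is already guaranteed by Theorem~\ref{d-mu-1-2-decomp}. Concretely, on $[0,1)\times\{1\}$ the first term is $\frac{dt}{1-t}$, which is finite on compact subsets of $[0,1)$. The part of $m\times\delta_1$ sitting at $(1,1)$ has $m$-measure zero and is excluded from the domain in any case. So there is no real obstacle: the corollary follows from Theorem~\ref{d-mu-1-2-decomp} together with Lemma~\ref{lebesgue-coro}.
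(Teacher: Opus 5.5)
Your proposal is correct and matches the paper's proof: specialize Theorem~\ref{d-mu-1-2-decomp} to $\rho^{(1)}=\rho^{(2)}=\sigma$, then substitute $S_*\sigma=m$ from Lemma~\ref{lebesgue-coro}. Your extra check that the resulting $\nu$ is a Radon measure on $[0,1]^2\backslash\{(1,1)\}$ is harmless but not needed, since the theorem already guarantees it.
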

	\begin{proof}
		Using Theorem~\ref{d-mu-1-2-decomp}, we get that 
		\begin{equation*}
		d\nu_{(\mathscr M_z,1)}(x)=\frac{1}{1-x_1}d((S_{*}\sigma)\times\delta_1)(x)+\frac{1}{1-x_2}d(\delta_1\times (S_{*}\sigma))(x).
		\end{equation*}	
		The rest of the proof follows from Lemma~\ref{lebesgue-coro}.
	\end{proof}		

\subsection*{Funding statement} The research of the first named author is supported by the Fellowship for Academic and Research Excellence (FARE ID: FA2508001) and the post-doctoral fellowship provided by IIT Kharagpur (ID: PDF2026176). The research of the second named author is supported by a post-doctoral fellowship provided by the National Board for Higher Mathematics (NBHM), India (Order No: 0204/21(22)/2025-R\&D- II/16318 dated December 09, 2025).

\end{document}